\documentclass[12pt]{article}
\usepackage{epsfig,amsmath,amssymb,latexsym,url}

\newcommand{\RR}{\mathbb{R}}

\def\bfa{\mathbf{a}}
\def\bfb{\mathbf{b}}

\def\bft{\mathbf{t}}
\def\bfu{\mathbf{u}}
\def\bfv{\mathbf{v}}

\def\bfx{\mathbf{x}}
\def\bfy{\mathbf{y}}

\def\tbar{\overline{t}}
\def\ubar{\overline{u}}
\def\vbar{\overline{v}}

\def\bflambda{{\boldsymbol \lambda}}

\def\per{{\rm per}}
\def\tr{{\rm tr}}

\newtheorem{theorem}{Theorem}
\newtheorem{lemma}{Lemma}

\newtheorem{corollary}{Corollary}

\newenvironment{proof}{\begin{trivlist}\item[]{\emph{Proof.}}}
               {\hfill$\Box$\end{trivlist}}

\def\Acknowledgement{\goodbreak\bigskip\noindent{\bf Acknowledgement.\ }}

\begin{document}

\title{Bounds on the determinant \\ of an exponential matrix}
\author{
Michael S. Floater\footnote{
Department of Mathematics,
University of Oslo,
PO Box 1053, Blindern,
0316 Oslo,
Norway,
{\it email: michaelf@math.uio.no}}
}                                                                               
                                                                                
\maketitle

\begin{abstract}
We derive upper and lower bounds on the determinant of an exponential matrix.
They can be transformed into corresponding bounds
for the determinant of a univariate Gaussian matrix.
\end{abstract}

\noindent {\em Keywords: } exponential matrix, determinant,
total positivity, divided difference, Vandermonde determinant,
Gaussian matrix.

\smallskip

\noindent {\em Math Subject Classification: }
15A15, 
33B10, 
65D05, 
65F40. 

\section{Introduction}
Consider the $n \times n$ matrix
\begin{equation}\label{eq:A}
A := [e^{x_iy_j}]_{i,j=1,\ldots,n} =
\left[
  \begin{matrix}
  e^{x_1 y_1} & \cdots & e^{x_1 y_n} \\
 \vdots & & \vdots \\
  e^{x_n y_1} & \cdots & e^{x_n y_n}
  \end{matrix}
  \right],
\end{equation}
for real $x_1 < x_2 < \cdots < x_n$ and $y_1 < y_2 < \cdots < y_n$.
It includes the Vandermonde and generalized Vandermonde matrices
as special cases, see, for example,
Pinkus~\cite[Section 4.2]{Pinkus:2009}.
In this paper, we study upper and lower bounds on the
determinant of $A$. The motivation comes
from Schumaker's conjecture
about partial interpolation by multivariate
Bernstein polynomials~\cite{Alfeld:2005}, \cite{Jaklic:2014},
\cite{Floater:2023}. The conjecture can be recast as the
non-singularity of Hadamard (element-wise) products of matrices in the
form of $A$, and the hope is that bounds on the determinant of $A$,
or the technique used to find them,
may help in the future towards a solution to the conjecture.
Bounds on the determinant $A$ have also been applied, either directly
or indirectly, to the error analysis of interpolation by exponential
functions by Yarotsky~\cite{Yarotsky:2013} and to finding optimal points
for interpolation by exponentials and
Gaussian radial basis functions by Bos and de Marchi~\cite{BosdeMarchi:2011}.

Let us recall some basic facts about $A$.
The determinant of~$A$ is positive, which was shown by
P\'{o}lya in 1920 using a form of Descartes' Rule of Signs,
see P\'{o}lya and Szeg\H{o}~\cite[Part Five, Problem 76]{Polya:1976}
and Gantmacher and Krein~\cite{Gantmakher:1937}.
Since all square submatrices of~$A$ have the same form as~$A$
itself, they too have positive determinant, and thus~$A$
is totally positive in the sense that all its minors are positive.
Every totally positive matrix satisfies the Hadamard inequality:
its determinant is bounded above by the product of its diagonal elements
(this follows from Sylvester's determinant identity;
see~\cite[Th{\'e}or{\`e}me 1]{Gantmakher:1937},
\cite[Section 1.4]{Pinkus:2009},
and~\cite[Chapter 6]{FallatJohnson:2011}), and so
$$ \det(A) \le e^{x_1 y_1 + \cdots + x_n y_n}. $$
Another consequence of total positivity is the
spectral theory of Gantmacher and Krein~\cite{Gantmakher:1937}
(which builds on Perron-Frobenius theory and properties of compound matrices).
So the eigenvalues of~$A$ are real, positive, and distinct, and
the eigenvectors satisfy sign-change properties;
see~\cite[Theorem 5.3]{Pinkus:2009}.

We can also express the determinant of $A$
using the remarkable Harish-Chandra-Itzykson-Zuber
formula~\cite{HarishChandra:1957,ItzyksonZuber:1980,GrossRichards:1989},
which involves an integral over unitary matrices (see Section~2),
and is connected to
Schur functions~\cite{Macdonald:1995,deMarchi:2001,Ait-Haddou:2019}.
While this formula is far from elementary to derive,
it was observed by Bos and de Marchi~\cite{BosdeMarchi:2011}
that the integrand can be bounded above and below using
the trace inequality of Coope and Renaud~\cite{CoopeRenaud:2009},
and this leads to corresponding bounds on $\det(A)$,
which are as follows.
Let $\bfx := (x_1,x_2,\ldots,x_n)$ and
$\bfy := (y_1,y_2,\ldots,y_n)$ and denote by $V(\bfx)$ the
Vandermonde determinant
$$ V(\bfx) = V(x_1,x_2,\ldots,x_n)
:= \prod_{1 \le i < j \le n} (x_j-x_i) $$
(the determinant of the matrix $[x_i^{j-1}]_{i,j=1,\ldots,n}$).
Let $c_n$ be the `superfactorial'
$$ c_n := \prod_{i=1}^n i!. $$
Then the bounds are
\begin{theorem}\label{thm:weak}
\begin{align*}
e^{x_1 y_n + x_2 y_{n-1} + \cdots + x_n y_1}
  \le \frac{c_{n-1} \det(A)}{V(\bfx) V(\bfy)} \le
   e^{x_1 y_1 + x_2 y_2 + \cdots + x_n y_n}.
\end{align*}
\end{theorem}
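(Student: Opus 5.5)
The plan is to follow the route sketched just before the statement: write $\det(A)$ with the Harish-Chandra--Itzykson--Zuber formula and bound the integrand pointwise with the Coope--Renaud trace inequality. Let $X=\mathrm{diag}(x_1,\ldots,x_n)$ and $Y=\mathrm{diag}(y_1,\ldots,y_n)$. The HCIZ formula, with the normalisation for which $\det[e^{x_iy_j}]$ appears, reads
\begin{equation*}
\int_{U(n)} e^{\tr(XUYU^*)}\,dU = c_{n-1}\,\frac{\det(A)}{V(\bfx)V(\bfy)},
\end{equation*}
where $dU$ is the normalised Haar measure on the unitary group $U(n)$. I would check the constant by letting $\bfx,\bfy\to 0$ after expanding $\det(A)$, or simply take it from the literature cited above. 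With this identity, $c_{n-1}\det(A)/(V(\bfx)V(\bfy))$ is the average of $e^{\tr(XUYU^*)}$ over a probability measure. It is therefore enough to show, for every unitary $U$,
\begin{equation*}
x_1y_n+x_2y_{n-1}+\cdots+x_ny_1 \le \tr(XUYU^*) \le x_1y_1+\cdots+x_ny_n .
\end{equation*}

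For this pointwise bound, write $U=[u_{ij}]$. Then
\begin{equation*}
\tr(XUYU^*)=\sum_{i,j} x_i\, |u_{ij}|^2\, y_j = \bfx^T S\,\bfy,
\end{equation*}
where $S=[|u_{ij}|^2]$ is doubly stochastic, since the rows and columns of a unitary matrix are unit vectors. By Birkhoff's theorem $S$ is a convex combination of permutation matrices. The linear function $S\mapsto \bfx^TS\bfy$ therefore lies between its minimum and maximum over permutations, namely $\min_\sigma\sum_i x_iy_{\sigma(i)}$ and $\max_\sigma\sum_i x_iy_{\sigma(i)}$. The rearrangement inequality, using that $\bfx$ and $\bfy$ are both increasing, identifies the maximum as $\sum_i x_iy_i$ (identity permutation) and the minimum as $\sum_i x_iy_{n+1-i}$ (reversal). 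This is the Coope--Renaud-type trace inequality, von Neumann's inequality for Hermitian matrices with prescribed spectra, proved here in elementary form.

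Exponentiating the pointwise bound and integrating against the probability measure $dU$ gives both inequalities of Theorem~\ref{thm:weak}.

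The main obstacle is the HCIZ identity with its exact constant $c_{n-1}$. Since the paper allows citing it, I would invoke it rather than prove it, and the rest is elementary. If a self-contained route were required, I would instead try to write $\det(A)/(V(\bfx)V(\bfy))$ as an iterated integral of a positive kernel with total mass $1/c_{n-1}$. One way to do this is through divided differences, via the Hermite--Genocchi formula applied repeatedly in $x$ and then $y$. The exponent would then be bounded by the same rearrangement argument. Making that mass and the exponent range come out exactly is the delicate part.
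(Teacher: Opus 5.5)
Your argument is correct and is essentially the paper's Section~2 proof: the unitary matrix integral with constant $c_{n-1}$, the identity $\tr(XUYU^*)=\bfx^T[\,|u_{ij}|^2\,]\bfy$ with a doubly stochastic matrix, Birkhoff--von Neumann plus the rearrangement inequality for the pointwise bound, and integration against the normalized Haar measure. For the self-contained route you mention at the end, the paper also gives an elementary inductive proof. It uses Lemma~\ref{lem:reductionupper} to write $\det(A)$ as an integral over $R(\bfx)$ of an order-$(n-1)$ exponential determinant, bounds the exponent termwise, and gets the constant from $\int_{R(\bfx)}V(\bft)\,d\bft=V(\bfx)/(n-1)!$.
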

On the other hand, it would
be preferable to have a more elementary proof of Theorem~\ref{thm:weak}.
This turns out to be the case, due to the fact that
we can alternatively express $\det(A)$ recursively, as an integral over
a rectangle in $\RR^{n-1}$ in which the integrand is
itself an exponential determinant, but of order $n-1$
(Lemma~\ref{lem:reductionupper}).
In this way we obtain Theorem~\ref{thm:weak}
by proving it first in the case $n=2$, and using induction on $n$ for
$n \ge 3$.
But not only that, using this recursive approach, we obtain
better bounds, which are as follows.
Define the sum,
$$ s(\bfx) := x_1 + x_2 + \cdots + x_n, $$
and denote the permanent of $A$ by $\per(A)$.
\begin{theorem}\label{thm:main}
$$ \frac{1}{c_{n-1}} e^{s(\bfx)s(\bfy)/n}
 \le \frac{\det(A)}{V(\bfx) V(\bfy)} \le
   \frac{1}{c_n} \per(A). $$
\end{theorem}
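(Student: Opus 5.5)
The plan is to prove both inequalities by induction on $n$, using the recursive representation of $\det(A)$ as an integral over a rectangle of an exponential determinant of order $n-1$ (Lemma~\ref{lem:reductionupper}). The base case $n=1$ is trivial, and $n=2$ can be checked directly. For $n=2$ we have
$$ \frac{\det(A)}{(x_2-x_1)(y_2-y_1)}= \frac{1}{(x_2-x_1)(y_2-y_1)}\int_{x_1}^{x_2}\!\int_{y_1}^{y_2} e^{st}\,dt\,ds, $$
i.e.\ the average of $e^{st}$ over the rectangle $[x_1,x_2]\times[y_1,y_2]$.

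For the lower bound, $st$ is bilinear, so its average over the rectangle equals its value at the centre, namely $\tfrac{s(\bfx)}{2}\tfrac{s(\bfy)}{2}$. Jensen's inequality then gives $e^{s(\bfx)s(\bfy)/4}$, which is the claim since $c_1=1$. For the upper bound, $e^{st}$ is convex in each variable separately, so averaging over each interval is bounded by the average of the two endpoint values. This gives $\tfrac14(e^{x_1y_1}+e^{x_1y_2}+e^{x_2y_1}+e^{x_2y_2})$, which is at most $\tfrac12\per(A)$ because $e^{x_1y_1}+e^{x_2y_2}\ge e^{x_1y_2}+e^{x_2y_1}$ (rearrangement). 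This is exactly the case $c_2=2$.

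For general $n$, I would write Lemma~\ref{lem:reductionupper} in the normalised form
$$ \frac{\det A(\bfx,\bfy)}{V(\bfx)V(\bfy)} = \int_{R} \frac{\det A(\bfx',\bft)}{V(\bfx')V(\bft)}\, w(\bft)\, e^{\phi(\bft)}\,d\bft . $$
Here $R=\prod_j[y_j,y_{j+1}]$, $\bfx'$ is the reduced set of $n-1$ points, $\phi$ is an affine function of $\bft$ coming from the removed row, and $w\ge0$ (built from $V(\bft)$) is a weight. I expect the lemma to force $\int_R w = 1/(n-1)!$; this normalisation is what produces the factorial ratio $c_{n-1}/c_{n-2}=(n-1)!$.

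For the lower bound, I insert the inductive bound $e^{s(\bfx')s(\bft)/(n-1)}/c_{n-2}$ and apply Jensen's inequality with respect to the probability measure $(n-1)!\,w\,d\bft$. The exponent is affine in $\bft$, so it suffices to compute the mean of $\bft$ under this measure. I expect the mean to satisfy $\mathbb{E}\,s(\bft)=\tfrac{n-1}{n}s(\bfy)$, by a symmetry or linearity argument (the analogue of the centre of the rectangle for $n=2$). Combined with the affine term $\phi$, the exponent should collapse to $s(\bfx)s(\bfy)/n$. I will verify this bookkeeping with the precise form of the lemma.

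For the upper bound, I insert the inductive bound $\per A(\bfx',\bft)/c_{n-1}$. The resulting integrand $e^{\phi(\bft)}\per A(\bfx',\bft)$ is a positive combination of exponentials in each $t_j$ separately, hence convex in each coordinate. I then bound each one-dimensional average by endpoint values, as in the $n=2$ case, and regroup the terms via the Laplace expansion $\per(A)=\sum_j e^{x_ny_j}\per(A_{nj})$, recovering $\per(A)$ with the extra factor $1/n$. This is the main obstacle. The weight $w$ is not a product measure, so coordinatewise convexity does not immediately yield endpoint averages. If it fails, I would first try expanding $w$ (a Vandermonde-type polynomial) into products and treating each monomial term separately. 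Alternatively, I would use a rearrangement inequality to show that the excess terms arising from non-diagonal endpoint choices are dominated by $\per(A)$, as happened for $n=2$.
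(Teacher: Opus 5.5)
\textbf{The upper bound is the genuine gap.} You flag it as the main obstacle but do not resolve it, and the route you sketch does not close. Because $V(\bft)$ is not a product weight, there is no iterated endpoint bound: the conditional weights depend on the other coordinates. You also give no mechanism that would produce the weights $1/n$ a Laplace expansion needs. Expanding $V(\bft)$ into monomials fails too, since those monomials have both signs.

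The paper avoids weighted averages altogether by bounding $\int_{R(\bfx)}V(\bft)F(\bft)\,d\bft\le\frac{V(\bfx)}{(n-1)!}\max_{R(\bfx)}F$. With a single reduction index $k$ even this is too weak. At the vertex $\bfv_i=(x_1,\ldots,x_{i-1},x_{i+1},\ldots,x_n)$ the integrand $e^{s(\bfx)y_k}\per(B_k(\bft))$ equals $e^{x_iy_k}\per(A_{ik})$. These $n$ values sum to $\per(A)$ (Laplace along column $k$), so their maximum exceeds the required $\frac1n\per(A)$ unless they all coincide.

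What you are missing is the following:
(i) average the inductive bound over all $k=1,\ldots,n$ and work with $f(\bft)=\sum_k e^{s(\bfx)y_k}\per(B_k(\bft))$, which is symmetric in $t_1,\ldots,t_{n-1}$ (permanents are invariant under row swaps) and convex (a positive sum of exponentials of linear forms);
(ii) Lemma~\ref{lem:axial2}: a symmetric convex function attains its maximum over the sequential rectangle at one of the $n$ vertices $\bfv_i$, because a vertex with a repeated $x_j$ lies on a coordinate segment whose endpoints, after reordering, are vertices with one fewer repetition;
(iii) $f(\bfv_i)=\sum_k e^{x_iy_k}\per(A_{ik})=\per(A)$ for every $i$, by Laplace expansion along row $i$.
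So the factor $1/n$ comes from averaging over $k$, not from endpoint weights.

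\textbf{Your $n=2$ check is also wrong.} The identity $\det(A)=\int_{x_1}^{x_2}\int_{y_1}^{y_2}e^{st}\,dt\,ds$ is false: for $\bfx=\bfy=(0,1)$ the left side is $e-1$ and the right side is $\sum_{k\ge1}1/(k\cdot k!)\approx1.32$. Correspondingly, your lower bound $e^{s(\bfx)s(\bfy)/4}$ is not the claimed $e^{s(\bfx)s(\bfy)/2}$. Your upper-bound step compares single entries $e^{x_iy_j}$ with the products appearing in $\per(A)$. It also uses $e^{x_1y_1}+e^{x_2y_2}\ge e^{x_1y_2}+e^{x_2y_1}$, which fails for $\bfx=(-2,-1)$, $\bfy=(1,2)$. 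The right starting point is $\det(A)/(V(\bfx)V(\bfy))=\frac{1}{b-a}\int_a^b e^\tau\,d\tau$ with $a=x_1y_2+x_2y_1$ and $b=x_1y_1+x_2y_2$. Jensen and the trapezoid bound then give exactly $e^{(a+b)/2}$ and $\frac12(e^a+e^b)=\frac12\per(A)$.

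\textbf{Your general lower-bound step is sound.} Jensen for the probability measure $(n-1)!\,V(\bft)\,d\bft/V(\bfx)$ on $R(\bfx)$ is a cleaner equivalent of the paper's divided-difference identity combined with Lemma~\ref{lem:ddid}. But the two facts you only ``expect'' must be proved (orientation as in Lemma~\ref{lem:reductionupper}):
\[ \int_{R(\bfx)}V(\bft)\,d\bft=\frac{V(\bfx)}{(n-1)!}, \qquad \int_{R(\bfx)}V(\bft)\,s(\bft)\,d\bft=\frac{V(\bfx)\,s(\bfx)}{n\,(n-2)!}, \]
so that the mean of $s(\bft)$ is $\frac{n-1}{n}s(\bfx)$. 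The second is a genuine computation, not a symmetry fact. One route is multilinearity: $\int_{R(\bfx)}\det[t_i^{a_j}]\,d\bft=\det[\int_{x_i}^{x_{i+1}}t^{a_j}\,dt]$. After undoing the row differences, this becomes an $n\times n$ generalized Vandermonde determinant in $\bfx$, with exponents $0,\ldots,n-1$ for the first integral and $0,\ldots,n-2,n$ for the second.
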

To see that this is an improvement,
let $S_n$ be the symmetric group of permutations of
$(1,2,\ldots,n)$, and for each permutation $\sigma \in S_n$, let
\begin{equation}\label{eq:phi}
 \phi_\sigma := \sum_{i=1}^n x_i y_{\sigma(i)}.
\end{equation}
Then the rearrangement inequality~\cite[Thm 368]{Hardy:1952} bounds
$\phi_\sigma$:
\begin{equation}\label{eq:rearrangement}
\sum_{i=1}^n x_i y_{n+1-i}
   \le \phi_\sigma \le \sum_{i=1}^n x_i y_{i}.
\end{equation}
Let $m_a$ and $m_g$ be the arithmetic and geometric means
of the exponentials $e^{\phi_\sigma}$, $\sigma \in S_n$,
$$
m_a := \frac{1}{n!} \sum_{\sigma \in S_n} e^{\phi_\sigma}, \quad
m_g :=
 \Big(\prod_{\sigma \in S_n} e^{\phi_\sigma} \Big)^{1/n!}. $$
Then,
$$ m_a = \frac{1}{n!} \per(A),
$$
and since
$$ \sum_{\sigma\in S_n} \phi_\sigma
= \sum_{i=1}^n x_i \sum_{\sigma\in S_n} y_{\sigma(i)}
= (n-1)! s(\bfx) s(\bfy), $$
we find
$$ m_g = e^{s(\bfx)s(\bfy)/n}, $$
and so Theorem~\ref{thm:main} can be rewritten as
$$ m_g \le \frac{c_{n-1} \det(A)}{V(\bfx)V(\bfy)} \le m_a. $$
and by (\ref{eq:rearrangement}),
$$ e^{x_1 y_n + x_2 y_{n-1} + \cdots + x_n y_1}
  < m_g < m_a < e^{x_1 y_1 + x_2 y_2 + \cdots + x_n y_n}.
$$

While the recursive proofs of Theorems~\ref{thm:weak} and \ref{thm:main}
are elementary, they are not simple either.
When we apply the induction step, 
the difficult task is to compute the resulting integral,
and the integrand is either a Vandermonde determinant or
the product of a Vandermonde determinant and a function of the
sum of the variables.
It turns out that there is an identity relating this kind of integral
to divided differences, which is derived in~\cite{Floater:2026a}.
For the lower bound of Theorem~\ref{thm:main},
we also need an estimate on
a divided difference from below, and for this we use an error
formula from~\cite{Floater:2003}.
For a survey of divided differences, see~de Boor~\cite{deBoor:2005},
and for some basic theory, see
Isaacson and Keller~\cite[Section 6.1]{Isaacson:1994}
and Steffensen~\cite{Steffensen:1927}.
For the upper bound of Theorem~\ref{thm:main},
we need an auxiliary result concerning the
maximization of a function that is both convex and symmetric.

The bounds of Theorem~\ref{thm:main}
can be transformed into corresponding bounds
for the determinant of a univariate Gaussian matrix,
and an application of the lower bound
is a simple method for selecting the shape parameter.
We treat this in Section~\ref{sec:gaussian}.

\section{The unitary matrix integral}

The unitary matrix integral formula
((3.15) of Gross and Richards~\cite{GrossRichards:1989})
is
\begin{equation}\label{eq:unitary}
 \frac{c_{n-1} \det(A)}{V(\bfx) V(\bfy)}
   = \int_{U(n)} e^{\tr(XUYU^*)} \, dU,
\end{equation}
where the integration is over the group of $n \times n$ unitary
matrices $U$ with respect to the normalized Haar measure.
Here,
$X = {\rm diag}(x_1,x_2,\ldots,x_n)$,
$Y = {\rm diag}(y_1,y_2,\ldots,y_n)$,
$U^*$ is the conjugate transpose of $U$, and ${\tr}$ denotes the
trace.
Multiplying out the matrices in the integrand, and taking
the trace gives
$$ \tr(XUYU^*) = \bfx^T (U \circ \overline{U}) \bfy, $$
with $\bfx$ and $\bfy$ treated as column vectors, and where
$\circ$ denotes the Hadamard product.
It follows that if $U = [u_{ij}]$, then
$$ \tr(XUYU^*) = \bfx^T V \bfy, $$
where $V = [v_{ij}]$ is the real matrix with elements
$v_{ij} = |u_{ij}|^2$.
Since $U$ is unitary, $V$ is doubly stochastic:
$v_{ij} \ge 0$, and
$$ \sum_{j=1}^n v_{ij} = 1, \quad i=1,\ldots,n; \qquad
 \sum_{i=1}^n v_{ij} = 1, \quad j=1,\ldots,n. $$
So by the Birkhoff-von Neumann theorem~\cite[p. 63]{Bapat:1997},
we can express $V$ as a convex combination of the permutation matrices
$P_\sigma = [p_{ij}]$ defined by
$$ p_{ij} = \begin{cases} 1 & \hbox{if $j = \sigma(i)$;} \cr
                          0 & \hbox{otherwise}.
\end{cases}
$$
So there are weights $\lambda_\sigma \ge 0$ with
$\sum_{\sigma \in S_n} \lambda_\sigma = 1$ such that
$$ V = \sum_{\sigma \in S_n} \lambda_\sigma P_\sigma. $$
Therefore,
$$ \bfx^T V \bfy = 
\sum_{\sigma \in S_n} \lambda_\sigma \bfx^T P_\sigma \bfy
 = \sum_{\sigma \in S_n} \lambda_\sigma \phi_\sigma, $$
with $\phi_\sigma$ as in (\ref{eq:phi}),
and so by (\ref{eq:rearrangement}),
$$ \sum_{i=1}^n x_i y_{n+1-i}
   \le \bfx^T V \bfy \le \sum_{i=1}^n x_i y_i. $$
Applying this to the integrand of (\ref{eq:unitary}),
and since $dU$ is normalized to give volume $1$, we
deduce Theorem~\ref{thm:weak}.

\section{The case $n=2$}

Next we give an elementary proof of both
Theorems~\ref{thm:weak} and~\ref{thm:main} when $n=2$. In this case,
$$ \det(A) = e^b - e^a, $$
where $a := x_1 y_2 + x_2 y_1$ and $b := x_1 y_1 + x_2 y_2$,
and
$$ V(\bfx)V(\bfy) = (x_2-x_1)(y_2-y_1) = b-a, $$
and so
$$ \frac{\det(A)}{V(\bfx)V(\bfy)}
   = \frac{e^b - e^a}{b-a}, $$
a first order divided difference of the exponential function.
By the mean value theorem, there is some $\xi \in (a,b)$
such that
$$ \frac{e^b - e^a}{b-a} = e^\xi, $$
and so
$$ e^a \le \frac{e^b - e^a}{b-a} \le e^b, $$
and
$$ e^{x_1y_2+x_2y_1} \le \frac{\det(A)}{V(\bfx)V(\bfy)}
 \le e^{x_1y_1+x_2y_2}, $$
which is Theorem~\ref{thm:weak} when $n=2$.
However, we can improve on these bounds by expressing
the difference $e^b-e^a$ in terms of the hyperbolic sine,
$$ e^b - e^a = 2 e^{(a+b)/2} \sinh\big((b-a)/2\big). $$
Using the well known estimates
$$ t \le \sinh(t) \le t \cosh(t), \qquad t \ge 0, $$
we then find that
$$ e^{(a+b)/2} \le \frac{e^b - e^a}{b-a} \le \frac{e^a + e^b}{2}, $$
and so
$$ e^{(x_1+x_2)(y_1+y_2)/2} \le
   \frac{\det(A)}{V(\bfx)V(\bfy)} \le \frac{1}{2} \per(A), $$
which is Theorem~\ref{thm:main} when $n=2$.

\section{A multiple integral}
To prove~Theorems~\ref{thm:weak} and~\ref{thm:main}
for $n \ge 3$ we express $\det(A)$ as a multiple integral
whose integrand is the determinant of an exponential
matrix of order $n-1$.
Let $R(\bfx) \subset \RR^{n-1}$ denote the `sequential' rectangle
\begin{equation}\label{eq:sequential}
 R(\bfx) := 
[x_1,x_2] \times [x_2,x_3] \times \cdots \times [x_{n-1},x_n],
\end{equation}
and for a continuous function $f:R(\bfx) \to \RR$, denote
its integral in $R(\bfx)$ as
$$ \int_{R(\bfx)} f(\bft) \, d \bft
 := 
\int_{x_{n-1}}^{x_n} \cdots
\int_{x_2}^{x_3}
\int_{x_1}^{x_2} f(t_1,t_2,\ldots,t_{n-1}) \, dt_1 \, dt_2 \cdots dt_{n-1},
$$
where $\bft := (t_1,t_2,\ldots,t_{n-1}) \in R(\bfx)$.
Consider again $A$ in (\ref{eq:A}) with $n \ge 3$.
Choose any column index $k \in \{1,2,\ldots,n\}$ and define
\begin{equation}\label{eq:uj}
u_j := \begin{cases} y_j-y_k, & j=1,\ldots,k-1; \cr
                       y_{j+1}-y_k & j=k,\ldots,n-1, \cr
\end{cases}
\end{equation}
and
\begin{equation}\label{eq:pk}
 \omega_k := (-1)^{k-1} u_1 \cdots u_{n-1}.
\end{equation}
Then
$$ u_1 < \cdots < u_{k-1} < 0 < u_k < \cdots < u_{n-1}, 
\quad\hbox{and} \quad \omega_k > 0. $$

\begin{lemma}\label{lem:reductionupper}
$$
\det(A) = e^{s(\bfx)y_k} \omega_k
 \int_{R(\bfx)} \det(B_k(\bft)) \, d \bft,
$$
where
$$
B_k(\bft) := \left[
\begin{matrix}
e^{t_1 u_1} & \cdots & e^{t_1 u_{n-1}} \\
\vdots & & \vdots \\
e^{t_{n-1} u_1} & \cdots & e^{t_{n-1} u_{n-1}}
\end{matrix}
\right].
$$
\end{lemma}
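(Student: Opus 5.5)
The plan is to factor out the $k$-th column, take successive row differences, and write each difference as an integral; multilinearity of the determinant then converts the row-by-row integrals into one integral over the rectangle $R(\bfx)$.

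\emph{Step 1: remove column $k$.} For each $i$, factor $e^{x_i y_k}$ out of row $i$. This pulls out the factor $e^{s(\bfx)y_k}$. The $(i,j)$ entry becomes $e^{x_i(y_j-y_k)}$, and column $k$ becomes the all-ones vector.

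\emph{Step 2: move column $k$ to the front.} This takes $k-1$ adjacent transpositions and contributes the sign $(-1)^{k-1}$. The remaining columns, in their original order, have entries $e^{x_i u_j}$ for $j=1,\ldots,n-1$, with $u_j$ as in (\ref{eq:uj}).

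\emph{Step 3: subtract consecutive rows.} For $i=n,n-1,\ldots,2$ in that order, replace row $i$ by row $i$ minus row $i-1$. The determinant is unchanged. The first column becomes $(1,0,\ldots,0)^T$, so expanding along it leaves the $(n-1)\times(n-1)$ determinant with entries
$$ e^{x_{i+1}u_j}-e^{x_i u_j} = u_j\int_{x_i}^{x_{i+1}} e^{t_i u_j}\,dt_i, \qquad i,j=1,\ldots,n-1. $$

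\emph{Step 4: factor and integrate.} Pull the factor $u_j$ out of column $j$; this gives $u_1\cdots u_{n-1}$, which combined with the sign from Step 2 is $\omega_k$. Row $i$ is now an integral over $t_i\in[x_i,x_{i+1}]$ of the row vector $(e^{t_iu_1},\ldots,e^{t_iu_{n-1}})$. Since the determinant is linear in each row separately, and each row uses its own variable $t_i$, the integrals can be taken outside one row at a time. This yields $\int_{R(\bfx)}\det(B_k(\bft))\,d\bft$ and hence the stated identity.

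The only delicate point is the sign bookkeeping in Steps 2 and 4, namely checking that $(-1)^{k-1}$ together with the $k-1$ negative values $u_1,\ldots,u_{k-1}$ gives exactly $\omega_k>0$. The interchange of integration and determinant in Step 4 is routine: the determinant is a finite sum of products in which each factor depends on a distinct variable, so Fubini applies trivially.
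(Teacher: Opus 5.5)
Your proof is correct and follows the paper's argument: you factor $e^{x_iy_k}$ out of each row, replace rows $2,\ldots,n$ by successive differences written as integrals, and reduce to order $n-1$. The only differences are cosmetic. You move column $k$ to the front to get the sign $(-1)^{k-1}$, whereas the paper expands directly along column $k$. You also spell out the row-wise multilinearity step that the paper leaves implicit.
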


\begin{proof}
For $i=1,\ldots,n$, we divide the $i$-th row of $A$ by $e^{x_i y_k}$
so that
$$
\det(A) = e^{x_1y_k} \cdots e^{x_ny_k} \det(A')
	= e^{s(\bfx)y_k} \det(A'),
$$
where
$$
A' :=
\left[
\begin{matrix}
e^{x_1 u_1} & \cdots & e^{x_1 u_{k-1}} & 1 &
e^{x_1 u_k} & \cdots & e^{x_1 u_{n-1}} \\
\vdots & & \vdots & \vdots & \vdots & & \vdots \\
e^{x_n u_1} & \cdots & e^{x_n u_{k-1}} & 1 &
e^{x_n u_k} & \cdots & e^{x_n u_{n-1}}
\end{matrix}
\right].
$$
Next we subtract row $n-1$ of $A'$ from row $n$ which does not
change $\det(A')$,
and write the differences as
$$ e^{x_n u_j} - e^{x_{n-1} u_j}
  = u_j \int_{x_{n-1}}^{x_n} e^{tu_j} \, dt, 
 \quad j=1,\ldots,n-1. $$
Then, with $\int = \int_{x_{n-1}}^{x_n} dt$, we can
replace row $n$ by
\begin{equation}\label{eq:newrow}
 \Big[u_1 \int e^{tu_1},\ldots,u_{k-1} \int e^{tu_{k-1}},0, 
  u_k \int e^{tu_k},\ldots,u_{n-1}\int e^{tu_{n-1}}\Big].
\end{equation}
Next we subtract row $n-2$ from row $n-1$,
and then we can replace row $n-1$ by (\ref{eq:newrow}) but now
with $\int = \int_{x_{n-2}}^{x_{n-1}} dt$.
We continue like this until we have modified the second row of $A'$ to
(\ref{eq:newrow}) with $\int = \int_{x_1}^{x_2} dt$.
Now $\det(A')$ reduces to a
determinant of order $n-1$, which can be written as the
product of $\omega_k$ and the stated multiple integral
of $\det(B_k(\bft))$.
\end{proof}

\section{The bounds of Theorem~\ref{thm:weak}}

We now prove Theorem~\ref{thm:weak} in general and
suppose that $n \ge 3$.
We can choose any $k \in \{1,\ldots,n\}$ and
apply Lemma~\ref{lem:reductionupper}, and since
$t_1 < t_2 < \cdots < t_{n-1}$ and
$u_1 < u_2 < \cdots < u_{n-1}$ in the matrix $B_k(\bft)$,
we can apply the induction hypothesis to deduce that
$$ e^{t_1u_{n-1} + \cdots + t_{n-1}u_1} \le
\frac{c_{n-2}\det(B_k(\bft))}{V(\bft)V(\bfu)}
 \le e^{t_1u_1 + \cdots + t_{n-1}u_{n-1}}. $$
Since $V(\bfu) = V(y_1,\ldots,y_{k-1},y_{k+1},\ldots,y_n)$ and
$$ \omega_k = (y_k-y_1)\cdots(y_k-y_{k-1})
         (y_{k+1}-y_k)\cdots(y_n-y_k), $$
we find
\begin{equation}\label{eq:pkVu}
\omega_k V(\bfu) = V(\bfy),
\end{equation}
and so
$$ \int_{R(\bfx)} V(\bft) e^{t_1u_{n-1} + \cdots + t_{n-1}u_1} \, d\bft
\le \frac{c_{n-2}\det(A)}{V(\bfy)e^{s(\bfx)y_k}} \le
\int_{R(\bfx)} V(\bft) e^{t_1u_1 + \cdots + t_{n-1}u_{n-1}} \, d\bft. $$
Since $u_1,\ldots,u_{k-1}$ are negative
and $u_k,\ldots,u_{n-1}$ are positive,
\begin{align*}
 t_1u_{n-1} + \cdots + t_{n-k}u_k &
\ge x_1u_{n-1} + \cdots + x_{n-k}u_k, \cr
 t_{n+1-k}u_{k-1} + \cdots + t_{n-1}u_1 &
\ge x_{n+2-k}u_{k-1} + \cdots + x_nu_1,
\end{align*}
and so
$$ s(\bfx)y_k + t_1u_{n-1} + \cdots + t_{n-1}u_1
\ge x_1y_n + \cdots + x_ny_1. $$
Similarly,
\begin{align*}
t_1u_1 + \cdots + t_{k-1}u_{k-1}
& \le x_1u_1 + \cdots + x_{k-1}u_{k-1}, \cr
t_ku_k + \cdots + t_{n-1}u_{n-1}
& \le x_{k+1}u_k + \cdots + x_nu_{n-1},
\end{align*}
and so
$$
s(\bfx)y_k + t_1u_1 + \cdots + t_{n-1}u_{n-1}
 \le x_1y_1 + \cdots + x_ny_n. $$
Therefore,
$$ e^{x_1y_n + \cdots + x_ny_1}
\int_{R(\bfx)} V(\bft) \, d\bft
\le \frac{c_{n-2}\det(A)}{V(\bfy)} \le
e^{x_1y_1 + \cdots + x_ny_n}
\int_{R(\bfx)} V(\bft) \, d\bft. $$
By Corollary~1 of~\cite{Floater:2026a},
\begin{equation}\label{eq:floater}
 \int_{R(\bfx)} V(\bft) \, d\bft = \frac{1}{(n-1)!} V(\bfx),
\end{equation}
and observing that $c_{n-2} (n-1)! = c_{n-1}$ completes the proof.

\section{Divided difference error formula}

For proving the lower bound of Theorem~\ref{thm:main}, we
use a formula for the error incurred when approximating
a divided difference by a normalized derivative.
Let $v_1,v_2,\ldots,v_n \in \RR$, let
$\vbar$ be their mean
$$ \vbar := \frac{v_1 + v_2 + \cdots + v_n}{n}, $$
and let
$$ S = \sum_{i=0}^n (v_i-\vbar)^2. $$

\begin{lemma}\label{lem:ddid}
If $f$ is $C^{n+1}$ in the smallest interval containing
$v_1,\ldots,v_n$ then there exists $\xi$ in that interval such that
$$ [v_1,v_2,\ldots,v_n]f = \frac{f^{(n-1)}(\vbar)}{(n-1)!}
+ \frac{S}{2} \frac{f^{(n+1)}(\xi)}{(n+1)!}. $$
\end{lemma}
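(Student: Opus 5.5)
We will prove the identity using the Hermite--Genocchi formula together with a Taylor expansion about $\vbar$ with integral remainder. (The sum defining $S$ should run over $i=1,\ldots,n$.)

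\textbf{Step 1: representation.} Write the divided difference as an integral over the simplex
$$ \Delta := \{\bflambda = (\lambda_1,\ldots,\lambda_n) : \lambda_i \ge 0,\ \textstyle\sum_i \lambda_i = 1\}. $$
With the normalized measure $d\mu$ on $\Delta$ (total mass $1$), the Hermite--Genocchi formula gives
$$ [v_1,\ldots,v_n]f = \frac{1}{(n-1)!}\int_\Delta f^{(n-1)}\Big(\sum_i \lambda_i v_i\Big)\, d\mu(\bflambda). $$
Set $g := f^{(n-1)}$, which is $C^2$, and put $w(\bflambda) := \sum_i \lambda_i v_i - \vbar$.

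\textbf{Step 2: Taylor expansion.} Expand $g$ about $\vbar$ to second order:
$$ g(\vbar + w) = g(\vbar) + w\, g'(\vbar) + \int_0^1 (1-s)\, g''(\vbar + s w)\, ds\; w^2 . $$
The measure $\mu$ is invariant under permutations of the coordinates, so $\int_\Delta \lambda_i \, d\mu = 1/n$ for every $i$. Hence $\int_\Delta w \, d\mu = 0$, and the linear term disappears.

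\textbf{Step 3: mean value theorem.} The remainder has the form
$$ \int_\Delta \int_0^1 (1-s)\, w^2\, g''(\vbar+sw)\, ds\, d\mu , $$
and the weight $(1-s)w^2$ is nonnegative. The point $\vbar + sw$ lies in the convex hull of the $v_i$, and $g''$ is continuous there. By the integral mean value theorem the remainder therefore equals
$$ g''(\xi) \int_\Delta \tfrac{1}{2}\, w^2 \, d\mu $$
for some $\xi$ in that interval. Note that $g'' = f^{(n+1)}$.

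\textbf{Step 4: second moment.} We must show that
$$ \frac{1}{(n-1)!}\cdot \frac12 \int_\Delta w^2 \, d\mu = \frac{S}{2(n+1)!}, $$
that is, $\int_\Delta w^2\, d\mu = S/(n(n+1))$. This is the step I expect to need the most care. Use the Dirichlet moments of the uniform simplex measure:
$$ \int_\Delta \lambda_i^2 \, d\mu = \frac{2}{n(n+1)}, \qquad \int_\Delta \lambda_i\lambda_j \, d\mu = \frac{1}{n(n+1)} \quad (i \ne j). $$
Because $\sum_i \lambda_i = 1$, we can write $w = \sum_i \lambda_i (v_i - \vbar)$. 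Put $d_i := v_i - \vbar$, so that $\sum_i d_i = 0$. Then
$$ \int_\Delta w^2 \, d\mu = \frac{1}{n(n+1)}\Big(2\sum_i d_i^2 + \sum_{i\ne j} d_i d_j\Big) = \frac{1}{n(n+1)}\Big(\sum_i d_i^2 + \big(\textstyle\sum_i d_i\big)^2\Big) = \frac{S}{n(n+1)} . $$
This is exactly the required value, and it completes the proof.

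The only genuinely nontrivial ingredients are:
\begin{itemize}
\item the correct normalization of Hermite--Genocchi, namely that the simplex volume is $1/(n-1)!$;
\item the Dirichlet moment values, which can be checked on $f(x) = x^{n+1}$ if one wants a sanity check.
\end{itemize}
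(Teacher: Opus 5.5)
Your proof is correct. It takes a different route from the paper, which gives no argument of its own here and simply imports the formula from Theorem~2 and Lemma~3 of \cite{Floater:2003}, a paper on remainder formulas for divided-difference expansions and on error bounds for numerical differentiation.

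Your version is self-contained: the only ingredients are the Hermite--Genocchi formula and the first two moments of the uniform measure on the simplex. It also makes the structure of the formula transparent:
\begin{itemize}
\item the expansion point $\vbar$ is forced, because it is the mean of $\sum_i \lambda_i v_i$ under $\mu$, and this is what kills the linear term;
\item the nonnegativity needed for the mean-value form is visible in the weight $(1-s)w^2$;
\item the constant is a variance, $\int_\Delta w^2\,d\mu = S/(n(n+1))$;
\item coincident nodes need no separate treatment.
\end{itemize}
Your normalizations and moments check out: with $f(x)=x^{n+1}$ both sides equal $\frac{n(n+1)}{2}\vbar^2+\frac{S}{2}$. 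You are also right that the sum defining $S$ should run over $i=1,\ldots,n$.

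The cited route places the formula inside a general theory of divided-difference expansions with explicit remainders.

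Your viewpoint has one further benefit. Section~7 only uses the inequality $[p_1,\ldots,p_n]f \ge f^{(n-1)}\big((p_1+\cdots+p_n)/n\big)/(n-1)!$. In your representation this is just Jensen's inequality for the convex function $f^{(n-1)}(\alpha)=e^{\ubar\alpha}$ under the probability measure $\mu$. So the lower bound of Theorem~\ref{thm:main} needs neither the full error term nor the $C^{n+1}$ hypothesis.
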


\begin{proof}
This was shown in Theorem~2 and Lemma 3 of~\cite{Floater:2003}.
\end{proof}

\section{The lower bound of Theorem~\ref{thm:main}}

To prove the lower bound of Theorem~\ref{thm:main},
we suppose $n \ge 3$.
We can apply Lemma~\ref{lem:reductionupper}
with any $k \in \{1,\ldots,n\}$,
and by the induction hypothesis,
$$
 \det(B_k(\bft)) \ge 
\frac{1}{c_{n-2}}
V(\bft) V(\bfu) e^{s(\bft)s(\bfu)/(n-1)}.
$$
and recalling (\ref{eq:pkVu}),
\begin{equation}\label{eq:low1}
 \det(A) \ge \frac{1}{c_{n-2}} V(\bfy) e^{s(\bfx)y_k}
\int_{R(\bfx)} V(\bft) e^{s(\bft)s(\bfu)/(n-1)} \, d\bft.
\end{equation}
If $s(\bfu) = 0$, then, by (\ref{eq:floater}),
the integral in (\ref{eq:low1}) becomes $V(\bfx)/(n-1)!$,
and the lower bound follows from the fact that $y_k = s(\bfy)/n$.
Otherwise, $s(\bfu) \ne 0$,
and we can write the integral in (\ref{eq:low1}) as
$$ \int_{R(\bfx)} V(\bft) f^{(n-1)}(s(\bft)) \, d\bft, $$
where
$$ f(\alpha) := \frac{e^{\ubar \alpha}}{\ubar^{n-1}}
\quad \hbox{and} \quad
\ubar := \frac{s(\bfu)}{n-1} \ne 0. $$
Then by Theorem~1 of~\cite{Floater:2026a},
$$ \int_{R(\bfx)} V(\bft) f^{(n-1)}(s(\bft)) \, d\bft
= V(\bfx) [p_1,p_2,\ldots,p_n]f, $$
where
\begin{equation}\label{eq:pi}
 p_i := x_1 + \cdots + x_{n-i} + x_{n+2-i} + \cdots + x_n
      = \Big(\sum_{j=1}^n x_j \Big) - x_{n+1-i}.
\end{equation}
Since $f^{(n+1)}(\alpha) = \ubar^2 e^{\ubar \alpha} \ge 0$ for
all $\alpha \in \RR$,
Lemma~\ref{lem:ddid} implies
$$ [p_1,p_2,\ldots,p_n]f \ge
\frac{1}{(n-1)!}
f^{(n-1)}\Big(\frac{p_1 + \cdots + p_n}{n}\Big)
= \frac{1}{(n-1)!}
e^{\ubar(p_1 + \cdots + p_n)/n}. $$
From (\ref{eq:pi}),
$$ p_1 + \cdots + p_n = 
      n \Big(\sum_{j=1}^n x_j \Big) - \sum_{i=1}^n x_{n+1-i}
       = (n-1) \sum_{j=1}^n x_j, $$
and so
$$ \int_{R(\bfx)} V(\bft) f^{(n-1)}(s(\bft)) \, d\bft
 \ge \frac{1}{(n-1)!} V(\bfx) e^{s(\bfu)s(\bfx)/n}, $$
and
$$ \det(A) \ge \frac{1}{c_{n-1}} 
V(\bfx) V(\bfy) e^{s(\bfx)(y_k+s(\bfu)/n)}. $$ 
Since
$$ y_k+\frac{s(\bfu)}{n}
 = \frac{1}{n} \Big(ny_k + \sum_{j=1}^n (y_j-y_k)\Big)
   = \frac{s(\bfy)}{n}, $$
the proof is complete.

\section{Maximization on $R(\bfx)$}

In order to prove the upper bound of 
Theorem~\ref{thm:main}, we will make use of the following
auxiliary result. Observe that the sequential rectangle $R(\bfx)$
in (\ref{eq:sequential}) is contained in the $(n-1)$-dimensional
cube $[x_1,x_n]^{n-1}$.
\begin{lemma}\label{lem:axial2}
For a convex function $f:[x_1,x_n]^{n-1} \to \RR$
that is symmetric in its variables,
$$ \max_{\bft \in R(\bfx)} f(\bft)
= \max_{i=1,\ldots,n} f(\bfv_i), $$
where
\begin{equation}\label{eq:vi}
\bfv_i := (x_1,\ldots,x_{i-1},x_{i+1},\ldots,x_n), \quad i=1,\ldots,n.
\end{equation}
\end{lemma}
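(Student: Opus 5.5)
Since $R(\bfx)$ is a product of intervals, it is convex and compact. A convex function attains its maximum over such a box at one of the vertices. So the first step is to reduce the problem to the $2^{n-1}$ vertices of $R(\bfx)$, that is, to points $\bft$ with $t_j \in \{x_j, x_{j+1}\}$ for each $j=1,\ldots,n-1$. This uses only the convexity of $f$: every point of the box is a convex combination of its vertices, so $f$ there is at most the maximum of $f$ over the vertices.

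Most vertices are not of the form $\bfv_i$. A vertex corresponds to a choice $\epsilon_j\in\{0,1\}$ with $t_j = x_{j+\epsilon_j}$. Such a vertex equals some $\bfv_i$ exactly when $\epsilon$ is monotone: $\epsilon_j=0$ for $j<i$ and $\epsilon_j=1$ for $j\ge i$. A non-monotone vertex has some index $j$ with $\epsilon_j=1$ and $\epsilon_{j+1}=0$. Then $t_j = t_{j+1} = x_{j+1}$, so that value is repeated and the vertex has two equal coordinates. The plan is to show that $f$ at such a vertex is dominated by $f$ at points with fewer "descents", and then to induct on the number of descents in $\epsilon$.

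The key step uses symmetry together with convexity. Consider a vertex $\bft$ with $t_j=t_{j+1}=x_{j+1}$. Replace the pair $(x_{j+1},x_{j+1})$ by the two pairs $(x_j,x_{j+2})$ and $(x_{j+2},x_j)$, keeping the other coordinates fixed. By symmetry of $f$, both resulting points $\bft',\bft''$ (in the cube $[x_1,x_n]^{n-1}$) give the same value, since they differ only by a transposition of two coordinates. Their midpoint, however, is $(x_j+x_{j+2})/2$ in both slots, not $x_{j+1}$. To get exactly $x_{j+1}$, use the pairs $(x_j, x_{j+1}+ (x_{j+1}-x_j))$ and its swap. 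These stay in $[x_1,x_n]$ only if $2x_{j+1}-x_j\le x_n$, which is awkward.

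The cleaner approach is to fix all coordinates except $t_j,t_{j+1}$ and view $g(a,b)=f(\ldots,a,b,\ldots)$ as a convex symmetric function on the segment $a+b=2x_{j+1}$ within the square $[x_j,x_{j+2}]^2$. Along that line $g$ is convex and symmetric about $a=b$, so its minimum, not its maximum, is at $a=b$. Hence $g(x_{j+1},x_{j+1})\le g(a,b)$ at the segment's endpoint, which is $(x_j,\,2x_{j+1}-x_j)$ or $(2x_{j+1}-x_{j+2},\,x_{j+2})$, whichever lies in the square. That endpoint is not a vertex of the same type, which is the main obstacle.

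To avoid this, I would instead run the whole argument on the permutation-invariant hull. Let $P$ be the convex hull of all coordinate permutations of the points $\bfv_i$. This is a union of permutohedra, and by symmetry $\max_P f=\max_i f(\bfv_i)$, because $f$ is convex on the hull of finitely many points and takes the value $f(\bfv_i)$ at every permutation of $\bfv_i$. It then suffices to show $R(\bfx)\subset P$. By convexity of $P$ it is enough that every vertex $\bft$ (each $t_j\in\{x_j,x_{j+1}\}$) lies in $P$. I would check this with the majorization characterization: the vertices $\bfv_i$ are points of the form "$\bfx$ with one entry deleted", and sorting $\bft$ gives a multiset obtained by replacing some distinct values by repeated neighbours. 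The claim is that each such $\bft$ is a convex combination of the $\bfv_i$ up to permutation. A concrete way is to write $\bft = \sum_i \lambda_i \bfv_i$ coordinatewise in sorted order, where $\lambda_i \ge 0$ and $\sum\lambda_i = 1$. Each coordinate $j$ of $\sum_i\lambda_i\bfv_i$ equals $\mu_j x_j+(1-\mu_j)x_{j+1}$ with $\mu_j=\sum_{i>j}\lambda_i$, a decreasing sequence from $1$ to $0$. This shows $\bft\in R(\bfx)$ is a convex combination of the unpermuted $\bfv_i$ exactly when $\mu_j=\sum_{i>j}\lambda_i$ matches the coordinate weights. That holds when the weights are monotone in $j$. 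For non-monotone weights, I would first apply the swap trick: exchanging two coordinates by symmetry lets me reorder the weights into monotone form, using that $\bft$ and its permutations give equal $f$-values. Verifying that this reordering stays inside $R(\bfx)$ is the step I expect to be hardest.
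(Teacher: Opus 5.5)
\paragraph{There is a genuine gap: the inclusion $R(\bfx)\subset P$ is never proved.} Your reduction is sound as far as it goes. With $P$ the convex hull of all coordinate permutations of $\bfv_1,\ldots,\bfv_n$, convexity and symmetry do give $\max_P f=\max_i f(\bfv_i)$, so it suffices to show $R(\bfx)\subset P$. But that inclusion is not a technical side step. It is equivalent to the lemma itself: apply the lemma to the distance to $P$, which is convex, symmetric, and vanishes at every $\bfv_i$. Your own computation shows that the unpermuted hull of the $\bfv_i$ is only the part of $R(\bfx)$ with non-increasing weights $\mu_j$. So every non-monotone vertex needs genuinely permuted points, and the ``swap trick'' cannot supply them. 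Every point of $R(\bfx)$ satisfies $t_1\le t_2\le\cdots\le t_{n-1}$, so transposing two distinct coordinates of a vertex leaves $R(\bfx)$. After that, the weight of a coordinate relative to the interval $[x_j,x_{j+1}]$ of its new slot is no longer defined. There is nothing to reorder inside $R(\bfx)$, so the argument stops exactly where the real work begins.

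\paragraph{The missing step is to move a single duplicated coordinate as far as the nearest missing values.} Your attempts varied two coordinates at once. Moving one coordinate only to an adjacent value does not help either: that value may already occur in $\bft$, so the number of repeated values need not drop. The paper's step is as follows. Suppose $t_{j-1}=x_j=t_j$. Let $i<j$ be the largest index with $x_i$ not a coordinate of $\bft$, and $k>j$ the smallest such index; both exist by counting. Then $t_r=x_{r+1}$ for $i\le r\le j-1$ and $t_r=x_r$ for $j\le r\le k-1$. Replacing $t_j$ by $x_i$ or by $x_k$ gives points $\bfa$, $\bfb$ with $\bft=(1-\lambda)\bfa+\lambda\bfb$, where $\lambda=(x_j-x_i)/(x_k-x_i)$. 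After sorting its coordinates, each of $\bfa$, $\bfb$ is a vertex of $R(\bfx)$ with one fewer repeated value. Convexity along this one-coordinate segment, together with symmetry, gives $f(\bft)\le\max\{f(\bfa),f(\bfb)\}$. Induction on the number of repeated values then finishes the proof. In your language, this is precisely the argument that every vertex of $R(\bfx)$ lies in $P$.
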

So $f$ attains its maximum in $R(\bfx)$ at one of $n$
of the $2^{n-1}$ vertices of $R(\bfx)$.

\begin{proof}
Since $f$ is convex on $R(\bfx)$, it attains its maximum at a vertex
of $R(\bfx)$.
Let $\bft = (t_1,\ldots,t_{n-1})$ be a vertex of $R(\bfx)$. Then
$$ t_j \in \{x_j,x_{j+1}\}, \quad j=1,\ldots,n-1. $$
So $x_1$ and $x_n$ occur at most once in $\bft$
but for $2 \le j \le n-1$, $x_j$
may occur twice, in which case
\begin{equation}\label{eq:repeat}
t_{j-1} = x_j = t_j.
\end{equation}
Let $m(\bft)$ denote the number of repeated $x_j$ in $\bft$,
i.e., the number of indices $j \in \{2,\ldots,n-1\}$
such that (\ref{eq:repeat}) holds.
We will show that if $m(\bft) > 0$, then there are vertices
$\bft'$ and $\bft''$ of $R(\bfx)$
with $m(\bft') = m(\bft'') = m(\bft) - 1$
such that
\begin{equation}\label{eq:reduce}
f(\bft) \le \max\{f(\bft'),f(\bft'')\}.
\end{equation}
By repeating this argument, this will imply
that the maximum of $f$ over the vertices of $R(\bfx)$
is attained at a vertex $\bft$ with $m(\bft) = 0$,
in which case $\bft$ is one of the $\bfv_i$ in (\ref{eq:vi}).

To prove (\ref{eq:reduce}),
choose some $j \in \{2,\ldots,n-1\}$ such that
(\ref{eq:repeat}) holds.
Then there must be at least one index
$i \in \{1,\ldots,j-1\}$ such that
$x_i \notin \{t_1,\ldots,t_{j-2}\}$.
Let $i$ be the largest such index.
Similarly, there must be at least one index
$k \in \{j+1,\ldots,n\}$ such that
$x_k \notin \{t_{j+1},\ldots,t_{n-1}\}$.
Let $k$ be the smallest such index.
We now have
$$
t_r = \begin{cases}
	x_{r+1}, & r=i,\ldots,j-1, \\
	x_r, & r=j,\ldots,k-1.
\end{cases}
$$
Since $t_j = x_j$ and $x_i < x_j < x_k$, we can
express $\bft$ as
$$ \bft = (1-\lambda) \bfa + \lambda \bfb, $$
where
$$ \lambda := \frac{x_j-x_i}{x_k-x_i}, $$
and
$$ \bfa := (t_1,\ldots,t_{j-1},x_i,t_{j+1},\ldots,t_{n-1}), $$
$$ \bfb := (t_1,\ldots,t_{j-1},x_k,t_{j+1},\ldots,t_{n-1}), $$
and by the convexity of $f$ in $[x_1,x_n]^{n-1}$,
$$ f(\bft) \le \max\{f(\bfa),f(\bfb)\}. $$
However,
$$ \bfa =
 (t_1,\ldots,t_{i-1},x_{i+1},\ldots,x_j,x_i,t_{j+1},\ldots,t_{n-1}), $$
and its elements are not non-decreasing, and so
$\bfa$ is not a vertex of $R(\bfx)$. But
we can re-order the elements to be non-decreasing, defining
$$ \bfa' := (t_1,\ldots,t_{i-1},
x_i,x_{i+1},\ldots,x_j,t_{j+1},\ldots,t_{n-1}). $$
Then, by the choice of $i$ and $j$, $\bfa'$ is a vertex of $R(\bfx)$
and $m(\bfa') = m(\bft)-1$, and
$f(\bfa') = f(\bfa)$ by the symmetry of $f$.

The point $\bfb$ may or may not be a vertex of $R(\bfx)$,
and there are two cases.
If $k=j+1$ then $\bfb$ is a vertex with $m(\bfb) = m(\bft) - 1$.
Otherwise, $k\ge j+2$ and then
$$ \bfb =
(t_1,\ldots,t_{j-1},x_k,x_{j+1},\ldots,x_{k-1},t_k,\ldots,t_{n-1}), $$
which is not a vertex.
Reordering the elements gives
$$ \bfb' := (t_1,\ldots,t_{j-1},
x_{j+1},\ldots,x_{k-1},x_k,t_k,\ldots,t_{n-1}), $$
and then $\bfb'$ is a vertex with $m(\bfb') = m(\bft)-1$
and $f(\bfb') = f(\bfb)$.
Thus we have proved (\ref{eq:reduce}).
\end{proof}

\section{The upper bound of Theorem~\ref{thm:main}}

For the upper bound of Theorem~\ref{thm:main},
let $n \ge 3$. For any $k \in \{1,\ldots,n\}$,
we can apply Lemma~\ref{lem:reductionupper}, and by
the induction hypothesis applied to $B_k(\bft)$,
and recalling (\ref{eq:pkVu}),
$$
\det(A) \le \frac{1}{c_{n-1}} e^{s(\bfx)y_k} V(\bfy)
\int_{R(\bfx)} V(\bft) \per(B_k(\bft)) \, d\bft.
$$
Taking the average of this over $k=1,\ldots,n$ implies
$$ \det(A) \le \frac{1}{n c_{n-1}} V(\bfy)
\int_{R(\bfx)} V(\bft) f(\bft) \, d\bft, $$
where
$$ f(\bft) := \sum_{k=1}^n e^{s(\bfx)y_k} \per(B_k(\bft)). $$
Then
$$ \det(A) \le \frac{1}{n c_{n-1}} 
\int_{R(\bfx)} V(\bft) \, d\bft \,
\max_{\bft \in R(\bfx)} f(\bft), $$
and by (\ref{eq:floater}),
$$ \det(A) \le \frac{1}{c_n} V(\bfx) V(\bfy)
\max_{\bft \in R(\bfx)} f(\bft). $$
Next, observe that swapping rows of $B_k(\bft)$ does not change its
permanent, and so $\per(B_k(\bft))$
is symmetric in $t_1,\ldots,t_{n-1}$, and therefore so is $f(\bft)$.
Further, for any real vector
$\bflambda = [\lambda_1,\ldots,\lambda_{n-1}]^T$,
the function
$$ g(\bft) := e^{\lambda_1 t_1 + \lambda_2 t_2 + \cdots + \lambda_{n-1} t_{n-1}}, $$
is convex because
the Hessian of $g$ is $H(g) = g \bflambda \bflambda^T$,
and so for any $\bfv \in \RR^{n-1}$,
$$ \bfv^T H(g) \bfv = (\bfv^T\bflambda)^2 g \ge 0. $$
Therefore, $\per(B_k(\bft))$, being the sum of
functions $g(\bft)$, is also convex, and so $f(\bft)$
is convex as well.
Therefore, by Lemma~\ref{lem:axial2},
$f$ attains its maximum in $R(\bfx)$ at one of the vertices
$\bfv_i$ in (\ref{eq:vi}).
Now observe that
$$
B_k(\bfv_i) = \left[
\begin{matrix}
e^{x_1 u_1} & \cdots & e^{x_1 u_{n-1}} \\
\vdots & & \vdots \\
e^{x_{i-1} u_1} & \cdots & e^{x_{i-1} u_{n-1}} \\
e^{x_{i+1} u_1} & \cdots & e^{x_{i+1} u_{n-1}} \\
\vdots & & \vdots \\
e^{x_{n-1} u_1} & \cdots & e^{x_{n-1} u_{n-1}}
\end{matrix}
\right].
$$
Multiplying its rows by the factors
$$ e^{x_1y_k},\ldots,e^{x_{i-1}y_k},
e^{x_{i+1}y_k},\ldots,e^{x_ny_k}, $$
respectively, we deduce that
$$ e^{s(\bfx)y_k} \per(B_k(\bfv_i)) = e^{x_iy_k} \per(A_{ik}), $$
where
$A_{ik}$ is the submatrix of $A$ formed by deleting
row $i$ and column $k$. Therefore, by the Laplace
expansion of $\per(A)$ by its $i$-th row we find
$$ f(\bfv_i) = \sum_{k=1}^n e^{x_iy_k} \per(A_{ik}) = \per(A),
\quad \hbox{for all $i=1,\ldots,n$.} $$
Thus the maximum of $f$ in $R(\bfx)$ is $\per(A)$,
which proves the upper bound of Theorem~\ref{thm:main}.

\section{Gaussian matrices}\label{sec:gaussian}

A matrix related to $A$ is the univariate Gaussian matrix
\begin{equation}\label{eq:Bgauss}
 B := [e^{-\lambda(t_j-t_i)^2/2}]_{i,j=1,\ldots,n},
\end{equation}
where $t_1,\ldots,t_n \in \RR$ are distinct and
$\lambda > 0$ is a parameter. This is the matrix that
arises when we interpolate
data at $t_1,\ldots,t_n$ with a linear combination of
the Gaussian radial basis functions
\begin{equation}\label{eq:gaussians}
\phi_j(t) := e^{-\lambda(t-t_j)^2/2}, \quad j=1,\ldots,n,
\end{equation}
see Micchelli~\cite{Micchelli:1986}, Fasshauer~\cite{Fasshauer:2011}.
We obtain bounds on $\det(B)$
from the bounds of~Theorem~\ref{thm:main}.
Let
$$ \overline{t} := \frac{1}{n} \sum_{i=1}^n t_i, \qquad
S := \sum_{i=1}^n (t_i - \overline{t})^2, \qquad
N := \binom{n}{2}. $$
\begin{corollary}\label{cor:gauss}
For $B$ in (\ref{eq:Bgauss}),
\begin{equation}\label{eq:gauss}
\frac{e^{-\lambda S}}{c_{n-1}} \le
\frac{\det(B)}{\lambda^N V(\bft)^2}
\le \frac{1}{c_n} \per(B).
\end{equation}
\end{corollary}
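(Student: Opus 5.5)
The plan is to write $B$ as a diagonally scaled exponential matrix of the form (\ref{eq:A}) and then apply Theorem~\ref{thm:main}. Expanding the exponent,
$$ -\lambda(t_j-t_i)^2/2 = -\lambda t_i^2/2 - \lambda t_j^2/2 + \lambda t_i t_j, $$
so $B = D A D$ with $D = {\rm diag}(e^{-\lambda t_1^2/2},\ldots,e^{-\lambda t_n^2/2})$. Here $A = [e^{x_i y_j}]$ with $x_i = \lambda t_i$ and $y_j = t_j$.

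Since $\det$ and $\per$ are permutation invariant under simultaneous reordering of rows and columns, I may assume $t_1<\cdots<t_n$. Then $\bfx$ and $\bfy$ are increasing because $\lambda>0$. We have
$$ V(\bfx) = \lambda^N V(\bft), \qquad V(\bfy)=V(\bft), $$
so $V(\bfx)V(\bfy) = \lambda^N V(\bft)^2$. I will use $V(\bft)^2$ to avoid sign issues under reordering.

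Next, $\det(B) = \det(D)^2\det(A) = e^{-\lambda\sum_i t_i^2}\det(A)$. Similarly, each term of the permanent picks up the factor $\prod_i e^{-\lambda t_i^2/2}\prod_j e^{-\lambda t_j^2/2}$, so $\per(B) = e^{-\lambda \sum t_i^2}\per(A)$.

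Multiplying the two inequalities of Theorem~\ref{thm:main} by $e^{-\lambda\sum t_i^2}$ gives the upper bound directly:
$$ \frac{\det(B)}{\lambda^N V(\bft)^2} \le \frac{1}{c_n}\per(B). $$
For the lower bound, $s(\bfx)s(\bfy)/n = \lambda (n\tbar)^2/n = \lambda n\tbar^2$. The exponent therefore becomes
$$ \lambda n \tbar^2 - \lambda\sum_i t_i^2 = -\lambda\sum_i (t_i-\tbar)^2 = -\lambda S, $$
which is the claimed bound $e^{-\lambda S}/c_{n-1}$.

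There is no real obstacle here. The only care needed is the bookkeeping: the factor $\lambda^N$ comes only from $V(\bfx)$, and the scaling by $D$ is applied identically to the determinant and the permanent, so it cancels consistently in both bounds.
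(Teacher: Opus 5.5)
Your proposal is correct and is essentially the paper's proof: scale row and column $i$ of $B$ by $e^{\lambda t_i^2/2}$ to get an exponential matrix, apply Theorem~\ref{thm:main}, and use $\sum_i t_i^2 - s(\bft)^2/n = S$. The only differences are minor. You substitute $x_i=\lambda t_i$, $y_j=t_j$ where the paper takes $x_i=y_i=\sqrt{\lambda}\,t_i$, which changes nothing, and you state explicitly the identity $\per(B)=e^{-\lambda\sum_i t_i^2}\per(A)$ that the paper uses without comment for the upper bound.
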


\begin{proof}
We may assume that $t_1 < \cdots < t_n$.
We multiply both row $i$ and column $i$ of $B$ by $e^{\lambda t_i^2/2}$,
and then
$$ \det(B) = e^{-\lambda(t_1^2 + \cdots + t_n^2)} \det(B'), $$
where
$$ B' := [e^{\lambda t_it_j}]_{ij=1,\ldots,n}. $$
With the substitutions $x_i := \sqrt{\lambda}t_i$ and
$y_j := \sqrt{\lambda}t_j$, the matrix $B'$ becomes $A$ in (\ref{eq:A}).
So the upper bound of Theorem~\ref{thm:main} implies
	$$ \det(B') \le \frac{1}{c_n} \lambda^N V(\bft)^2 \per(B'), $$
and thus the upper bound for $\det(B)$ follows.

The lower bound of Theorem~\ref{thm:main} implies
$$ \det(B') \ge \frac{1}{c_{n-1}} \lambda^N V(\bft)^2
e^{\lambda s(\bft)^2/n}, $$
and so
$$ \det(B) \ge \frac{1}{c_{n-1}} \lambda^N V(\bft)^2
e^{-\lambda ((t_1^2 + \cdots + t_n^2)- s(\bft)^2/n)}.
$$
As is well known,
$$ \sum_{i=1}^n t_i^2 - \frac{1}{n} \Big(\sum_{i=1}^n t_i \Big)^2 = S $$
(by replacing $t_i$ by $(t_i-\tbar) + \tbar$ in the first sum)
and thus we obtain the lower bound of (\ref{eq:gauss}).
\end{proof}

Various methods have been proposed for optimizing
the shape parameter in radial basis interpolation;
see~\cite{Rippa:1999}, \cite[Section 5.2]{Fasshauer:2011},
\cite{LarssonFornberg:2005}.
As a general rule, for the Gaussians (\ref{eq:gaussians}),
it is usual to choose $\lambda$ at least to be inversely
proportional to the squares of the distances among the $t_i$.
We can use the lower bound of Corollary~\ref{cor:gauss} to
make a choice of $\lambda$ that agrees with that general rule.
We can write the lower bound as
$$ \det(B) \ge \frac{1}{c_{n-1}} V(\bft)^2 F(\lambda), $$
where
$$ F(\lambda) := \lambda^N e^{-\lambda S}. $$
The function $F(\lambda)$
approaches zero both as $\lambda \to 0$ and as $\lambda \to \infty$,
and this suggests choosing the parameter~$\lambda$
to maximize $F$. The maximum is attained when $F'(\lambda) = 0$,
i.e., when
$$ \lambda = \lambda_* := \frac{N}{S} = 
  \frac{n(n-1)}{2} \Big/ \sum_{i=1}^n (t_i - \overline{t})^2. $$

The following numerical examples were used to test various
values of $\lambda$.
We took $n=101$ and $t_i = (i-1)/100$, $i=1,2,\ldots,101$, and
$$ f_1(x) := x \cos(3\pi x), \qquad f_2(x) := 2 x\sqrt{1-x},
\qquad 0 \le x \le 1. $$
We computed the interpolant for $\lambda = 10^k \lambda_*$,
with $k=-2,-1,0,1,2$.
The max errors in $[0,1]$ for $f_1$ and $f_2$ are shown
in Table~\ref{table:1},
and we see that $k=0$ gives the smallest error for both $f_1$ and $f_2$,
suggesting that $\lambda_*$ seems to offer a good choice
of shape parameter.
\begin{table}\centering
\begin{tabular}{c|c|c}
$\lambda$ \hfill & Error, $f_1$ & Error, $f_2$ \\
\hline
$0.01 \times \lambda_*$ \hfill & $2.8 \times 10^{-6}$ & $0.47$ \\
$0.1 \times \lambda_*$ \hfill & $5.8 \times 10^{-8}$ & $0.030$ \\
$\lambda_*$ \hfill & $5.4 \times 10^{-8}$ & $0.020$ \\
$10 \times \lambda_*$ \hfill & $0.013$ & $0.044$ \\
$100 \times \lambda_*$ \hfill & $0.13$ & $0.10$ \\
\end{tabular}
\caption[]{Effect of choice of $\lambda$} \label{table:1}
\end{table}
Figure~\ref{fig:f1} shows the Gauss interpolant to $f_1$
with $\lambda=\lambda_*$.
\begin{figure}[ht]
\centerline{\includegraphics[width=0.5\textwidth]{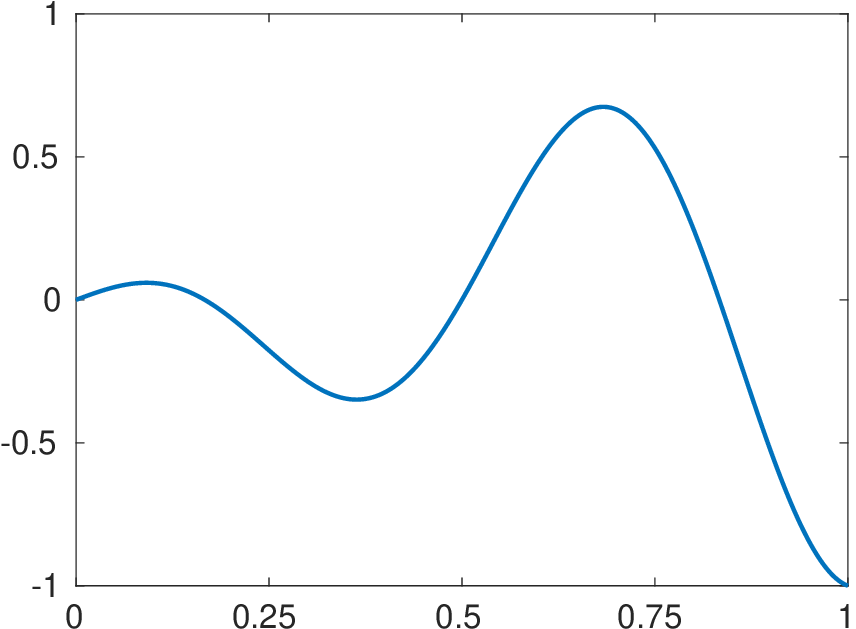}}
\caption{Gauss interpolant to $f_1$ with $\lambda = \lambda_*$.} 
\label{fig:f1}
\end{figure}

\Acknowledgement{I wish to thank the referees for their suggestions
for improvements, especially the importance of the unitary matrix integral,
to Stefano de Marchi and Jonathan Novak for help with references,
to Geir Dahl for the reminder about the elegant Birkhoff-von Neumann theorem,
and to Espen Sande for the encouragement to persevere with this project.}

\bibliography{exp_short}

@Article{Ait-Haddou:2019,
  author =    {R. Ait-Haddou and M.-L. Mazure},
  title =     {The fundamental blossoming inequality in {C}hebyshev spaces
	       --- {I}: applications to {S}chur functions},
  journal =   {Found. Comp. Math.},
  year =      {2018},
  volume =    {18},
  pages =     {135-158}
}

@Article{Alfeld:2005,
  author =    {P. Alfeld and L. L. Schumaker},
  title =     {A ${C}^2$ trivariate macro-element based on the {C}lough-{T}ocher-split of a tetrahedron},
  journal =   {{C}omp. {A}ided {G}eom. {D}esign.},
  year =      {2005},
  volume =    {22},
  pages =     {710-721}
}

@Book{Bapat:1997,
  author =    {R. B. Bapat and T. E. S. Raghavan},
  title =     {Nonnegative matrices and applications},
  publisher = {Cambridge University Press},
  year =      {1997}
}

@Article{deBoor:2005,
  author =    {C. {de Boor}},
  title =     {Divided differences},
  journal =   {Surveys in Approx. Theory},
  year =      {2005},
  volume =    {1},
  pages =     {46-69}
}

@Article{BosdeMarchi:2011,
  author =    {L. Bos and S. de Marchi},
  title =     {On optimal points for interpolation},
  journal =   {Dolomites {R}esearch {N}otes on {A}pproximation},
  year =      {2011},
  volume =    {4},
  pages =     {8-12}
}

@Article{CoopeRenaud:2009,
  author =    {I. D. Coope and P. F. Renaud},
  title =     {Trace inequalities with applications to orthogonal regression
               and matrix nearness problems},
  journal =   {J. Inequalities Pure Appl. Math.},
  year =      {2009},
  volume =    {10},
  pages =     {Paper No. 92, pp. 1-7} 
}

@Book{FallatJohnson:2011,
  author =    {S. M. Fallat and C. R. Johnson},
  title =     {Totally non-negative matrices},
  publisher = {Princeton University Press},
  year =      {2011}
}

@Article{Fasshauer:2011,
  author =    {G. E. Fasshauer},
  title =     {Positive definite kernels: past, present and future},
  journal =   {Dolomites {R}esearch {N}otes on {A}pproximation},
  year =      {2011},
  volume =    {4},
  pages =     {21-63}
}

@Article{Floater:2003,
  author =    {M. S. Floater},
  title =     {Error formulas for divided difference
     expansions and numerical differentiation},
  journal =   {J. Approx. Theory},
  year =      {2003},
  volume =    {122},
  pages =     {1-9} 
}

@Article{Floater:2023,
  author =    {M. S. Floater},
  title =     {On a conjecture concerning interpolation by
	       bivariate {B}ernstein polynomials},
  journal =   {J. Approx. Theory},
  year =      {2023},
  volume =    {293},
  pages =     {105920, pp. 1-14} 
}

@Misc{Floater:2026a,
  author =    {M. S. Floater},
  title =     {A divided difference identity for a class of multiple integrals},
  year =      {2026},
  howpublished =  {preprint}
}

@Article{Gantmakher:1937,
  author =    {F. R. Gantmakher and M. G. Krein},
  title =     {Sur les matrices compl{\`e}tement non n{\'e}gatives
	       et oscillatoires},
  journal =   {Compositio Math.},
  year =      {1937},
  volume =    {4},
  pages =     {445-476} 
}

@Article{GrossRichards:1989,
  author =    {K. Gross and D. St. P. Richards},
  title =     {Total positivity, spherical series, and hypergeometric
               functions of matrix argument},
  journal =   {J. Approx. Theory},
  year =      {1989},
  volume =    {59},
  pages =     {224-246} 
}

@Book{Hardy:1952,
  author =    {G. H. Hardy and J. E. Littlewood and G. Polya},
  title =     {Inequalities, 2nd edition},
  publisher = {Cambridge University Press},
  year =      {1952}
}

@Article{HarishChandra:1957,
  author =    {Harish-Chandra},
  title =     {Differential operators on a semisimple {L}ie algebra},
  journal =   {Amer. J. Math.},
  year =      {1957},
  volume =    {79},
  pages =     {87-120} 
}

@Book{Isaacson:1994,
  author =    {E. Isaacson and H. B. Keller},
  title =     {Analysis of numerical methods},
  publisher = {Dover},
  address =   {New York},
  year =      {1994}
}

@Article{ItzyksonZuber:1980,
  author =    {C. Itzykson and J. B. Zuber},
  title =     {The planar approximation. {II}},
  journal =   {J. Math. Phys.},
  year =      {1980},
  volume =    {21},
  pages =     {411-421}
}

@Article{Jaklic:2014,
  author =    {G. Jakli\v{c} and T. Kandu\v{c}},
  title =     {On positivity of principal minors of bivariate {B}ezier collocation matrix},
  journal =   {J. Appl. Math. and Comp.},
  year =      {2014},
  volume =    {227},
  pages =     {320-328}
}

@Article{LarssonFornberg:2005,
  author =    {E. Larsson and B. Fornberg},
  title =     {Theoretical and computational aspects of multivariate
	       interpolation with increasingly flat radial basis functions},
  journal =   {Comp. Math. Appl.},
  year =      {2005},
  volume =    {49},
  pages =     {103-130}
}

@Book{Macdonald:1995,
  author =    {I. G. Macdonald},
  title =     {Symmetric functions and {H}all polynomials},
  publisher = {Oxford University Press},
  year =      {1995}
}

@Article{deMarchi:2001,
  author =    {S. de Marchi},
  title =     {Polynomials arising in factoring generalized {V}andermonde
	       determinants: an algorithm for computing their coefficients},
  journal =   {Math. Comp. Modelling},
  year =      {2001},
  volume =    {34},
  pages =     {271-281}
}

@Article{Micchelli:1986,
  author =    {C. A. Micchelli},
  title =     {Interpolation of scattered data: distance matrices and conditionally positive definite functions},
  journal =   {Const. Approx.},
  year =      {1986},
  volume =    {2},
  pages =     {11-22}
}

@Book{Pinkus:2009,
  author =    {A. Pinkus},
  title =     {Totally positive matrices},
  publisher = {Cambridge University Press},
  year =      {2009}
}

@Book{Polya:1976,
  author =    {G. P\'{o}lya and G. Szeg\H{o}},
  title =     {Problems and theorems in analysis {II}},
  publisher = {Springer-Verlag},
  address =   {New York},
  year =      {1976}
}

@Article{Rippa:1999,
  author =    {S. Rippa},
  title =     {An algorithm for selecting a good value for the parameter $c$
	       in radial basis function interpolation},
  journal =   {Adv. Comp. Math.},
  year =      {1999},
  volume =    {11},
  pages =     {193-210}
}

@Book{Steffensen:1927,
  author =    {J. F. Steffensen},
  title =     {Interpolation},
  publisher = {Williams and Wilkins},
  address =   {Baltimore},
  year =      {1927}
}

@Article{Yarotsky:2013,
  author =    {D. Yarotsky},
  title =     {Univariate interpolation by exponential functions and
               {G}aussian {RBF}s for generic sets of nodes},
  journal =   {J. Approx. Theory},
  year =      {2013},
  volume =    {166},
  pages =     {163-175} 
}

\end{document}